\definecolor{royalblue}{RGB}{65, 105, 225}
\definecolor{seagreen}{RGB}{46, 139, 87}
\definecolor{firebrick}{RGB}{178,34,34}
\definecolor{darkviolet}{RGB}{138, 43, 226}
\definecolor{carrotorange}{RGB}{237, 145, 33}
\pgfplotsset{every tick label/.append style={font=\small}}
\title{\LARGE \bf Scalable Average Consensus with Compressed Communications}
\author{Mohammad Taha Toghani and C\'{e}sar A. Uribe
\thanks{The authors are with the Department of Electrical and Computer Engineering, Rice University, 6100 Main St, Houston, TX 77005, USA, \{mttoghani, cauribe\}@rice.edu.}
}
\begin{document}
\maketitle

\begin{abstract}
We propose a new decentralized average consensus algorithm with compressed communication that scales linearly with the network size $n$. We prove that the proposed method converges to the average of the initial values held locally by the agents of a network when agents are allowed to communicate with compressed messages. The proposed algorithm works for a broad class of compression operators (possibly biased), where agents interact over arbitrary static, undirected, and connected networks. We further present numerical experiments that confirm our theoretical results and illustrate the scalability and communication efficiency of our algorithm.
\end{abstract}

\section{Introduction}
\label{sec:introduction}

We consider the problem of decentralized average consensus over a network of $n$ agents, where each agent \mbox{$i\in[n]$} starting from an initial vector \mbox{$\vx_i\in\bbR^d$}, seeks to reach consensus on the global average through communication with its neighbors. Formally, the agents attempt to collaboratively solve the following optimization problem:
\vspace{-0.3em}
\begin{align}\label{eq:opt-prob}
    \vx^\star := \argmin_{\vx\in\bbR^d} \frac{1}{2n}\sum_{i=1}^{n}\lVert \vx-\vx_i\rVert^2,
\end{align}
only by sharing information with their local neighbors on the corresponding communication network.

The average consensus problem is at the core of many decentralized problems like inference~\cite{nedic2017fast} and optimization~\cite{Nedic2009DistributedSM,nedic2010constrained} which themselves are motivated by a wide range of applications such as decentralized federated learning~\cite{lalitha2018fully}, distributed localization and tracking~\cite{manley2006localization}, distributed sensor fusion~\cite{xiao2005scheme}, distributed time synchronization~\cite{syed2006time}, etc. These algorithms generally enjoy advantages like parallel computation, privacy, and resiliency to the central party's failure~\cite{kairouz2019advances}. However, they raise several important challenges such as the existence of adversaries, connection failure, synchronization, communication overhead, and scalability~\cite{wang2021field}.

In gossip type algorithms, each node \mbox{$i\in[n]$} builds a sequence \mbox{$\{\vx_i(t)\}_{t\geq 0}$} over the course of time, by interacting with its neighbors~\cite{xiao2004fast,cai2014average,Olshevsky2017LinearTA}. Given a set of initial parameters \mbox{$\vx_i(0)$}, for all \mbox{$i\in[n]$}, their objective is to solve~\eqref{eq:opt-prob}, i.e., reach consensus on \mbox{$\overline{\vx} := \big(\sum_{i=1}^{n}\vx_i{(0)}\big)/n$}. The convergence rate of such algorithms essentially depends on the connectivity of the network over which the agents interact~\cite{xiao2004fast}.

Decentralized consensus frameworks classically require agents to share their current estimates of the average value with their neighbors. This imposes a significant communication overhead on the network when $d$, the estimates' dimension, is large\cite{xiao2004fast,cai2014average}. To address this issue, several average consensus methods have been proposed under quantized communication techniques\cite{kashyap2007quantized,nedic2009distributed,frasca2009average, baldan2009efficient,thanou2012distributed,aysal2008distributed,carli2010quantized}, wherein the agents reduce the number of transmitted bits per communication round. However, convergence is generally not exact (i.e., only to some point close true average), or increasingly finer quantization is required. Recently, the authors in~\cite{cai2011quantized,koloskova2019decentralized} used an error-feedback scheme to provide algorithms with exact consensus to the average. Nevertheless, the dependency on the network topology and the number of agents is suboptimal. Recent studies have explored these phenomena in optimization and inference problems~\cite{kovalev2021linearly,taheri2020quantized,toghani2021communication,song2021compressed}.

The network size $n$ plays an essential role in the scalability of the gossip-type algorithms. Network structures with low connectivity, e.g., path and ring, have quadratic mixing times \mbox{$\mcO(n^2)$}\cite{nedic2019graph}, i.e., the number of iterations necessary for them to reach consensus grows quadratically with $n$. In~\cite{Olshevsky2017LinearTA}, the author suggested a momentum-based approach that implicitly improves the dependence of mixing time by a factor $n$. This technique has been extended to optimization and social learning problems~\cite{nedic2017fast,nedic2017achieving}.

In this paper, we jointly address the (i) \textit{communication-efficiency} and (ii) \textit{scalability} challenges for the decentralized average consensus problem. Motivated by~\cite{koloskova2019decentralized,Olshevsky2017LinearTA}, we propose a scalable algorithm that requires agents to communicate compressed messages using a class of randomized compression operators.  Prior efforts have proposed either scalable~\cite{Olshevsky2017LinearTA} or communication-efficient~\cite{koloskova2019decentralized} algorithms, while our work exploits both.

Our contributions can be summarized as follows:
\begin{itemize}[leftmargin=1em]
    \item We present a novel scalable and communication-efficient algorithm for the average consensus problem.
    \item Under an appropriate compression operator, we provide convergence guarantees for our proposed algorithm as well as an extension of the algorithm in~\cite{Olshevsky2017LinearTA}. Moreover, we show the convergence rate depends linearly on the number of nodes.
    \item We present the communication advantages of our algorithm through numerical results on two classes of networks with low connectivity.
\end{itemize}

The remainder of this paper is structured as follows. In Section~\ref{sec:setup}, describing the problem setup, we propose our algorithm, \textit{Scalable Compressed Gossip}, and state our theoretical results. In Section~\ref{sec:analysis}, we present the convergence proof for our algorithm. Section~\ref{sec:experiments} provides numerical results for the proposed algorithm. Finally, conclusions and future works are remarked in Section~\ref{sec:conclusion}.

\noindent\textbf{$\diamond$ Notation:} We write $[n]$ to denote the set \mbox{$\{1,\dots,n\}$}. We use the bolding notation for vectors and matrices. For a matrix \mbox{$\mX$}, we write $\mX_{ij}$ to denote the entry in the $i$-th row and $j$-th column. We use $\mI_n$ for the identity matrix of size $n\times n$ as well as $\vect{1}_n$ for the vector of all one with size $n$, where we may drop $n$ for brevity. We refer to agents by subscripts. We write $\lambda_i(\mW)$ to denote the \mbox{$i$-th} largest eigenvalue of matrix $\mW$, in terms of magnitude. We denote \mbox{$\lVert\vx\rVert$} and \mbox{$\lVert \mX \rVert_F$} respectively as $2$-norm of vector $\vx$ and frobenius norm of matrix $\mX$. We refer to matrix norm of a square matrix $\mW$ as \mbox{$\lVert \mW \rVert$}. We denote \mbox{$\mA \otimes \mB$} as the Kronecker product of any two matrices $\mA$ and $\mB$. We write $\vx(t)$ in reference to the value of parameter $\vx$ at time $t$. % We call $\vp$ a stochastic vector, if $\vect{1}^\top\vp=1$ with non-negative entries. %We also denote \mbox{$\lVert \vp - \vq\rVert_{\text{TV}} = \left(1/2\right)\lVert\vp-\vq\rVert_1$} as the total variation between the two stochastic vectors $\vp,\vq$.

\section{Problem Setup, Algorithm, \& Results}\label{sec:setup}

This section first states the communication setup and describes the class of compression operators used by the proposed algorithm. We then present our scalable and communication-efficient algorithm and provide its convergence analysis.

\noindent\textbf{$\diamond$ Communication Network:} Consider a set of $n$ agents interacting over a fixed, undirected, and connected communication network \mbox{$\mcG = \{[n],\mcE\}$}, where \mbox{$\mcE\subseteq [n]\times[n]$} is the set of edges. If there is a link between any two agents $i$ and $j$, then they may exchange information with each other. We denote $\mcN_i$ as the set of agent $i$'s neighbors as well as $\mcN_i' = \mcN_i\cup\{i\}$, for all $i\in[n]$. We denote matrix \mbox{$\mW\in [0,1]^{n\times n}$} with positive diagonal entries, a proper \textit{mixing matrix} corresponding to network $\mcG$, if it is symmetric (\mbox{$\mW=\mW^\top$}), doubly stochastic (\mbox{$\mW\vect{1}=\mW^\top\vect{1}=\vect{1}$}), and \mbox{$\mW_{ij}=0$} for \mbox{$(i,j)\notin\mcE$}, \mbox{$i\neq j$}. We also denote \mbox{$\delta(\mW)$} as the \textit{spectral gap} of matrix $\mW$, i.e., the gap between the first and second largest eigenvalues of $\mW$, which lies in \mbox{$(0,1]$}. Furthermore, given an undirected graph \mbox{$\mcG = \{\mcV,\mcE\}$}, we define its associated \textit{Metropolis–Hasting} mixing matrix \mbox{$\mW = \mathcal{MH}(\mcG)$}~\cite{nonaka2010hitting} as follows:
\begin{align}\label{eq:lazy-metropolis-matrix}
\mW_{ij} =
\begin{cases}
% \frac{1}{\max\big\{|\mcN_i'|,|\mcN_j'|\big\}}, & \text{if}~(i,j)\in\mcE\vspace{0.3em}\\
{1}/{\max\big\{|\mcN_i'|,|\mcN_j'|\big\}}, & \text{if}~(i,j)\in\mcE\vspace{0.3em}\\
1-\displaystyle\sum_{j\neq i}\mW_{ij}, &  \text{if}~i=j\vspace{-0.5em}\\
0,& \text{otherwise}
\end{cases}
\end{align}

\noindent\textbf{$\diamond$ Compression Operator:}
Here, we introduce a class of compression operators that has been widely studied for distributed optimization~\cite{alistarh2017qsgd,koloskova2019decentralized,beznosikov2020biased}. We assume the compression operator \mbox{$Q:\bbR^m \times \mcZ \times [0,1) \rightarrow \bbR^m$} satisfies
\begin{align}\label{eq:q-comp}
\bbE_{\vzeta}\big\lVert Q(\vx,\vzeta,\omega) - \vx \big\rVert^2 \leq \omega^2 \big\lVert \vx \big\rVert^2, \qquad \forall \vx\in \bbR^d,
\end{align}
where \mbox{$\omega\in[0,1)$}, \mbox{$\vzeta$} is a random variable with output space $\mcZ$, and \mbox{$\bbE_{\vzeta}[.]$} indicates the expectation over the internal randomness of $Q$. Note that in~\eqref{eq:q-comp}, \mbox{$\omega=0$} implies no compression (i.e., exact communications). Hereafter, we drop $\vzeta,\omega$ from $Q$ and $\bbE$ for simplicity of notation.

The class of randomized operators introduced in~\eqref{eq:q-comp} embraces a wide range of functions, both sparsification, and quantization, some of which we mention in Example~\ref{exam:q-comp}\cite{toghani2021communication}.

\begin{example}\label{exam:q-comp} The following operators fulfill~\eqref{eq:q-comp}:

\begin{itemize}[leftmargin=1em]
\item \mbox{$\mathrm{rand}_{k}$}: Select $k$ out of $d$ coordinates randomly and mask the rest to zero, \mbox{$\omega^2=1{-}k/d$}.
\item \mbox{$\mathrm{top}_{k}$}: Select $k$ out of $d$ coordinates with highest magnitude and mask the rest to zero, \mbox{$\omega^2=1{-}k/d$}.
\item \mbox{$\mathrm{qsgd}_{k}$}: Round each coordinate of \mbox{${\lvert\vx\rvert}/{\lVert\vx\rVert}$} to one of the \mbox{$u=2^{k{-}1}{-}1$} quantization levels (\mbox{$k{-}1$} bits), and one bit for the sign of the coordinate, i.e.,
\begin{align*}
    \mathrm{qsgd}_{k}(\vx) &= \frac{\sign(\vx).\lVert\vx\rVert}{u\tau}\left\lfloor u\frac{\lvert\vx\rvert}{\lVert\vx\rVert}+\vzeta\right\rfloor, \qquad \vzeta{\sim}[0,1]^d,
\end{align*}
where \mbox{$\tau=1{+}\min\big\{{d}/{u^2},{\sqrt{d}}/{u}\big\}$}, and \mbox{$\omega^2 = 1{-}\tau^{-1}$}.
\end{itemize}
\end{example}

We next propose our method and discuss its features.

\begin{algorithm}[!t]
    \caption{Scalable Compressed Gossip (\textit{SCG})}\label{alg:scg}
	{\textbf{input:} initial parameters \mbox{$\vx_i(0)\in\bbR^d$}, \mbox{for all $i\in[n]$}, network \mbox{$\mcG=(\mcV,\mcE)$} with mixing matrix $\mW$, stepsize \mbox{$\gamma \in (0,1]$}, operator $Q$ with \mbox{$\omega\in [0,1)$}, momentum \mbox{$\sigma\in [0,1)$}.}\\
	
	\vspace{-1.2em}
	\begin{algorithmic}[1]
	    \STATE{$\hat{\vx}_i(0) := \vect{0}$, $\vy_i(0):=\vx_i(0), \quad\forall i\in[n]$}
	    \FOR{$t$ \textbf{in} $0,\dots,t{-}1$, in parallel $\forall i \in [n]$}
	    \STATE{$\vq_i(t):= Q(\vx_i(t) - \hat{\vx}_i(t))$}\label{ln:diff-comp}
	    \STATE{Send $\vq_i(t)$ and receive $\vq_j(t)$, \quad for all $j\in\mcN_i$}\label{ln:communicate}
	    \STATE{$\hat{\vx}_j(t{+}1):=\hat{\vx}_j(t)+\vq_j(t)$, \quad for all $j\in\mcN_i'$}\label{ln:update-x-hat}
	    \STATE{$\vy_i(t{+}1) := \vx_i(t) + \gamma\sum\limits_{j\in\mcN_i'}  \mW_{ij}\left(\hat{\vx}_j(t{+}1) - \hat{\vx}_i(t{+}1)\right)$}\label{ln:update-y}
	    \STATE{\vspace{-0.4em}$\vx_i(t{+}1) := (1{+}\sigma)\,\vy_i(t{+}1) - \sigma\,\vy_i(t)$}\label{ln:update-x}
	   % \STATE{\vspace{-0.4em}$\vx_i(t{+}1) := (1{+}\sigma)\vy_i(t{+}1) - \sigma\vy_i(t)$}\label{ln:update-x}
	   \ENDFOR
	\end{algorithmic}
\end{algorithm}

\noindent\textbf{$\diamond$ Algorithm:} We here present our communication-efficient and scalable gossip type algorithm. As we discussed in Section~\ref{sec:introduction}, let \mbox{$\vx_i(t)$} be the vector belonging to agent $i$ at time $t$, for all \mbox{$i\in[n]$} and \mbox{$t\geq 0$}. Similar to~\cite{koloskova2019decentralized},  we consider an error-feedback framework, wherein each agent $i$ gradually estimates \mbox{$\hat\vx_j(t)$}, an approximation of its neighbors' parameters \mbox{$\vx_j(t)$} (including itself), for all \mbox{$j\in\mcN_i'$}. Algorithm~\ref{alg:scg} presents a detailed pseudo-code for our method. Each agent $i$ begins with an initial $\vx_i(0)$ and a slack parameter \mbox{$\vy_i(0)=\vx_i(0)$}, besides $\hat\vx_j(0)=\vect{0}$. Lines~\ref{ln:diff-comp}-\ref{ln:update-x} of Algorithm~\ref{alg:scg} describe the operations for each round of the algorithm. In a nutshell, agent $i$ at round $t$, (i) computes a compressed version \mbox{$\vq_i(t)$} of the difference between \mbox{$\vx_i(t)$} and \mbox{$\hat\vx_i(t)$}, (ii) exchanges compressed vectors \mbox{$\vq_i(t)$} and \mbox{$\vq_j(t)$} with each neighbor \mbox{$j\in\mcN_i$}, (iii) uses \mbox{$\vq_j(t)$} to update \mbox{$\hat\vx_j(t{+}1)$}, for all \mbox{$j\in\mcN_i'$}, then (iv) updates \mbox{$\vy_i(t{+}1)$} based on \mbox{$\vx_i(t)$} and \mbox{$\hat\vx_j(t{+}1)$}, for all \mbox{$j\in\mcN_i'$}, and finally (v) extrapolates \mbox{$\vx_i(t{+}1)$} based on \mbox{$\vy_i(t{+}1)$} and \mbox{$\vy_i(t)$}. 

We now state a matrix notation for our algorithm. Let \mbox{$\mX(t)=\big[\vx_1(t),\dots,\vx_n(t)\big]^\top$}, \mbox{$\hat{\mX}(t)=\big[\hat{\vx}_1(t),\dots,\hat{\vx}_n(t)\big]^\top$}, \mbox{$Q(\mX)=\big[Q(\vx_1),\dots,Q(\vx_n)\big]^\top$}, \mbox{$\overline{\mX}=\big[\overline{\vx},\dots,\overline{\vx}\big]^\top$}, as well as \mbox{$\mY(t)=\big[\vy_1(t),\dots,\vy_n(t)\big]^\top$}, all be matrices of size \mbox{$n\times d$}. Then, Algorithm~\ref{alg:scg} may be written as follows:
\begin{align}\label{eq:update-scg}
\begin{split}
    \hat{\mX}(t{+}1) &\coloneqq \hat{\mX}(t) + Q\big(\mX(t)-\hat{\mX}(t)\big),\\
    \mY(t{+}1) &\coloneqq \mX(t) + \gamma\big(\mW{-}\mI\big)\hat{\mX}(t{+}1),\\
    \mX(t{+}1) &\coloneqq (1{+}\sigma) \mY(t{+}1) - \sigma \mY(t),
\end{split}
\end{align}
with \mbox{$\mY(0)=\mX(0)$}. Given the fact that matrix $\mW$ is doubly stochastic, we can see that \mbox{$\frac{\vect{1}\vect{1}^\top}{n}\mX(t) = \frac{\vect{1}\vect{1}^\top}{n}\mY(t) = \overline{\mX}$}, for all $t\geq 0$. In other words, Algorithm~\ref{alg:scg} maintains the mean of $\mX(t)$ and $\mY(t)$ constant.
\vspace{0.5em}

\noindent\textbf{$\diamond$ Comparison:} Algorithm~\ref{alg:scg} implicitly yields the following three methods\footnote{\textit{SEG} with the update rule in~\eqref{eq:update-seg}, is an extension of the algorithm in~\cite{Olshevsky2017LinearTA} which we analyze its convergence in Theorem~\ref{thm:seg}.}:
\begin{itemize}[leftmargin=1em]
    \item \small\textbf{Exact Gossip (EG)~\cite{xiao2004fast}:} $\sigma=0$, $\omega=0$,\vspace{0.3em}
    \item \small\textbf{Compressed Gossip (CG)~\cite{koloskova2019decentralized}:} $\sigma=0$, $\omega\in[0,1)$,\vspace{0.3em}
    \item \small\textbf{Scalable Exact Gossip (SEG)~\cite{Olshevsky2017LinearTA}:} $\sigma=\frac{5n-\sqrt{\gamma}}{5n+\sqrt{\gamma}}$, $\omega=0$.\vspace{0.3em}
\end{itemize}

Note that \textit{SEG} and \textit{SCG} require the agents know the network size $n$ or some \mbox{$U=\mcO(n)$} to compute $\sigma$ (see~\cite{Olshevsky2017LinearTA}).

Before stating the main results, let us compare our algorithm with prior works. Table~\ref{tab:gossip-comparison} illustrates the linear convergence rates of the algorithms mentioned above along with a conservative bound for their feasible step-size $\gamma$ and compression ratio $\omega$. First, \textit{EG} and \textit{SEG} linear rates, which require exact communication, have a quadratic and linear dependence on $n$ respectively. We will discuss in Section~\ref{sec:analysis} how $\gamma$ impacts the spectral gap of the mixing matrix in~\eqref{eq:update-scg}. Second, \textit{CG} enjoys an arbitrary compression with a rate of $\mcO(n^4)$, but the choice of $\gamma$ is limited to $\mcO(n^{-4})$. In this work, we use a different technique to analyze our algorithm \textit{SCG}, where we restrict the choice of $\omega$ and let $\gamma$ be arbitrary. As shown in Table~\ref{tab:gossip-comparison}, \textit{CG} and \textit{SCG} enjoy the same convergence rates as \textit{EG} and \textit{SEG}, with bounded $\gamma$. Given a reasonable bound for $\omega$, our algorithm has a better dependence on $n$ than \textit{CG} given the same step-size $\gamma$. We conjecture that $\gamma$ offers a trade-off between the convergence rate and the value of $\omega$. In other words, with decreasing $\gamma$ proportional to $n^{{-}1}$, the feasible set for $\omega$ expands proportional to $n$, which implies a worse convergence rate dependence on $n$. We will discuss this trade-off in Fig.~\ref{fig:gossip-qsgd-feasibility}.

\begin{table}[!t]
\caption{Comparison of the worst case convergence rates for \textit{EG}, \textit{SEG}, \textit{CG}, and \textit{SCG}.}
\label{tab:gossip-comparison}
\vspace{-0.7em}
\begin{center}
\begin{minipage}{\linewidth}
\centering
\resizebox{\linewidth}{!}{
\begin{tabular}{clcc} \toprule
Algorithm & Linear Rate\footnote{Convergence rates are linear, with different dependence on $n$ and $\gamma$. Rates are presented for the worst case graphs where \mbox{$\delta(\mW)=\mcO(n^{-2})$}.} & Stepsize ($\gamma$) & $\omega$\\

\midrule
\midrule
\begin{tabular}{c} \textbf{EG}~\cite{xiao2004fast} \end{tabular}& 
$\mcO\big(1{-}\gamma n^{-2}\big)$ & $(0,1]$ & $0$ \\[2pt]

\midrule
\begin{tabular}{c} \textbf{SEG}~\cite{Olshevsky2017LinearTA} \end{tabular}& $\mcO\big(1{-}\gamma^{\frac{1}{2}}n^{-1}\big)$ & $\big(0,\frac{1}{2}\big]$ & $0$\\[2pt]

\midrule
\begin{tabular}{c} \textbf{CG}~\cite{koloskova2019decentralized}\end{tabular}& 
$\mcO\big(1{-}n^{-4}\big)$& $\mcO\big(n^{-4}\big)$& $[0,1)$\\[2pt]

\midrule
\begin{tabular}{c} \textbf{CG}\footnote{An alternative analysis for \textit{CG} with bounded $\omega$ and flexible $\gamma$.}\end{tabular}& 
$\mcO\big(1{-}\gamma n^{-2}\big)$ & $(0,1]$ &$\Big[0,\Theta\Big(\frac{1}{(1{+}\gamma)n^{2}}\Big)\Big]\,\,$\\[2pt]

% \midrule
% \begin{tabular}{c} \textbf{\textit{SCG}}\\
% {\small\textbf{\color{magenta}This Work}} \end{tabular}& $\mcO\big(1{-}\gamma^{\frac{1}{2}}n^{-1}\big)$ & $\big(0,\frac{1}{2}\big]$ & $\left[0,\Theta\left(\frac{1}{1+\cred{\gamma^{\frac{1}{2}}n^2}}\right)\right]$\\[2pt]

\midrule
\begin{tabular}{c} \textbf{SCG}\\
{\small\textbf{\color{magenta}This Work}} \end{tabular}& $\mcO\big(1{-}\gamma^{\frac{1}{2}}n^{-1}\big)$ & $\big(0,\frac{1}{2}\big]$ & $\Big[0,\Theta\Big(\frac{1}{(1{+}\gamma)n^{2}}\Big)\Big]$\footnote{Asymptotic bound for $\omega$ in Theorem~\ref{thm:scg}.}\\[2pt]
\bottomrule[1pt]
\end{tabular}
}
\end{minipage}
\end{center}

\vspace{-2em}
\end{table}

\noindent\textbf{$\diamond$ Main Results:} Here, we first propose the convergence guarantees for \textit{SEG} and then \textit{SCG}. As we mentioned earlier, under \mbox{$\omega=0$},~\eqref{eq:update-scg} turns into the update rule for \textit{SEG}:
\begin{align}\label{eq:update-seg}
\begin{split}
    \mY(t{+}1) &\coloneqq \mX(t) + \gamma\big(\mW{-}\mI\big)\mX(t),\\
    \mX(t{+}1) &\coloneqq (1{+}\sigma) \mY(t{+}1) - \sigma \mY(t),
\end{split}
\end{align}
The following theorem states the convergence rate for~\eqref{eq:update-seg}.

\begin{theorem}[An extension of Theorem~2.1 from~\cite{Olshevsky2017LinearTA}]\label{thm:seg}
Let stepsize $\gamma\in\big(0,\frac{1}{2}\big]$, \mbox{$\mY{(0)} = \mX{(0)}$}, and \mbox{$\mW = \mathcal{MH}(\mcG)$}. The following property holds for the update rule in~\eqref{eq:update-seg}:
\begin{align*}
    \Psi_{x}(t) \leq 2\lambda^t \Psi_{x}{(0)},
\end{align*}
where \mbox{$\Psi_{x}(t){=}\big\lVert \mX(t) {-} \overline{\mX}\big\rVert_F$}, \mbox{$\lambda{=}1{-}\frac{\sqrt{\gamma}}{5n}$}, when \mbox{$\sigma{=}\frac{5n-\sqrt{\gamma}}{5n+\sqrt{\gamma}}$}.
\end{theorem}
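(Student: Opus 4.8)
The plan is to eliminate $\mY$, diagonalize the resulting single-matrix recursion, and reduce the statement to a family of scalar (Chebyshev-type) recursions — the route of \cite{Olshevsky2017LinearTA}, carried out with $\mW$ replaced by $\widetilde{\mW}:=(1{-}\gamma)\mI+\gamma\mW$. Substituting $\mY(t{+}1)=\widetilde{\mW}\mX(t)$ into \eqref{eq:update-seg} yields $\mX(t{+}1)=(1{+}\sigma)\widetilde{\mW}\mX(t)-\sigma\widetilde{\mW}\mX(t{-}1)$ for $t\geq1$, with $\mX(1)=\big((1{+}\sigma)\widetilde{\mW}-\sigma\mI\big)\mX(0)$. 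Here $\widetilde{\mW}$ is symmetric and doubly stochastic, shares an orthonormal eigenbasis $\{\mathbf{u}_i\}$ with $\mW$, has eigenvalue $1$ along $\vect{1}$, and its remaining eigenvalues $\widetilde{\mu}_i=1-\gamma(1-\lambda_i)$, $i\geq2$ (with $\lambda_2\geq\cdots\geq\lambda_n$ the nontrivial eigenvalues of $\mW$, so $\lambda_2=1-\delta(\mW)$), lie in $\big[0,\,1-\gamma\,\delta(\mW)\big]$ — the lower end using $\gamma\leq\tfrac12$ and $\lambda_n\geq-1$, the upper end by definition of $\delta(\mW)$. Since $\widetilde{\mW}\,\overline{\mX}=\overline{\mX}$ and the scheme keeps the mean of $\mX(t)$ fixed, $\mathbf{E}(t):=\mX(t)-\overline{\mX}$ obeys the same recursion with $\mathbf{E}(1)=\big((1{+}\sigma)\widetilde{\mW}-\sigma\mI\big)\mathbf{E}(0)$; expanding $\mathbf{E}(0)=\sum_{i\geq2}\mathbf{u}_i\mathbf{b}_i^{\top}$ in the eigenbasis, each coefficient decouples, so $\mathbf{b}_i(t)=g_i(t)\,\mathbf{b}_i(0)$ where $g_i(0)=1$, $g_i(1)=(1{+}\sigma)\widetilde{\mu}_i-\sigma$, and $g_i(t{+}1)=(1{+}\sigma)\widetilde{\mu}_i g_i(t)-\sigma\widetilde{\mu}_i g_i(t{-}1)$. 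By orthonormality $\Psi_x(t)^2=\sum_{i\geq2}|g_i(t)|^2\lVert\mathbf{b}_i(0)\rVert^2$, so it suffices to show $|g_i(t)|\leq2\lambda^t$ for every $i\geq2$.

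For the scalar estimate, the $i$-th recursion has characteristic polynomial $z^2-(1{+}\sigma)\widetilde{\mu}_i z+\sigma\widetilde{\mu}_i$, with negative discriminant exactly when $\widetilde{\mu}_i<\mu_c:=\tfrac{4\sigma}{(1+\sigma)^2}$. For $\sigma=\tfrac{5n-\sqrt\gamma}{5n+\sqrt\gamma}$ one computes $\tfrac{2\sigma}{1+\sigma}=1-\tfrac{\sqrt\gamma}{5n}=\lambda$ and $\mu_c=1-\tfrac{\gamma}{25n^2}$; the constant $5$ is chosen so that $\widetilde{\mu}_i\leq1-\gamma\,\delta(\mW)<\mu_c$, invoking the standard lower bound $\delta\big(\mathcal{MH}(\mcG)\big)=\Omega(n^{-2})$ valid for all connected graphs (used in \cite{Olshevsky2017LinearTA}, cf.\ \cite{nonaka2010hitting}). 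The two roots are then conjugate $r_ie^{\pm\mathrm{i}\theta_i}$ with $r_i=\sqrt{\sigma\widetilde{\mu}_i}\leq\sqrt{\sigma\mu_c}=\lambda$, and a short computation gives $\cos\theta_i=\sqrt{x_i}$, $\sin\theta_i=\sqrt{1-x_i}$ for $x_i:=\widetilde{\mu}_i/\mu_c\in[0,1)$, $g_i(t)=r_i^t\big(\cos t\theta_i+\kappa_i\sin t\theta_i\big)$ for an explicit $\kappa_i$ (fixed by $g_i(0),g_i(1)$), and $1+\kappa_i^2=\big(a^2-(2a{-}1)x_i\big)/\big(x_i(1-x_i)\big)$ with $a:=\tfrac{1+\sigma}{2}$ and $2a{-}1=\sigma$. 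Using $|g_i(t)|\leq r_i^t\sqrt{1+\kappa_i^2}$ and $r_i^t=\lambda^t x_i^{t/2}$, together with $a^2-(2a{-}1)x_i=(1-a)^2+\sigma(1-x_i)$, gives $|g_i(t)|^2\leq\lambda^{2t}\Big(\tfrac{(1-a)^2 x_i^{\,t-1}}{1-x_i}+\sigma x_i^{\,t-1}\Big)$; since $x\mapsto x^{t-1}/(1-x)$ increases on $(0,1)$ for $t\geq2$ and $x_i\leq x_2:=\widetilde{\mu}_2/\mu_c$, the bracket is at most $\tfrac{(1-a)^2}{1-x_2}+1$, whence $|g_i(t)|\leq2\lambda^t$ for all $t\geq2$ once $\tfrac{(1-a)^2}{1-x_2}\leq3$. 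The cases $t\in\{0,1\}$ are immediate: $(1{+}\sigma)\widetilde{\mu}_i-\sigma\in[-\sigma,1]$ forces $\Psi_x(1)\leq\Psi_x(0)$, and $2\lambda\geq1$.

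The step I expect to carry essentially all the weight is the remaining inequality $\tfrac{(1-a)^2}{1-x_2}\leq3$. Substituting $\sigma$ and $\widetilde{\mu}_2=1-\gamma\,\delta(\mW)$, it is implied by $\delta\big(\mathcal{MH}(\mcG)\big)\geq\tfrac{1}{25n^2}+\tfrac{1}{3(5n+\sqrt\gamma)^2}$, so it is precisely here that a genuine graph-theoretic ingredient — the $\Omega(n^{-2})$ lower bound on the Metropolis--Hasting spectral gap, uniform over connected graphs — must be invoked, and it is this that forces the conservative constant $5$ in $\sigma$ (a more aggressive momentum would require the exact value of $\delta(\mW)$). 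I expect this, together with carefully checking the closed-form identities ($\cos\theta_i=\sqrt{x_i}$, $\sin\theta_i=\sqrt{1-x_i}$, the formula for $1+\kappa_i^2$), the monotonicity of $x^{t-1}/(1-x)$, and the mean-preservation of $\mX(t)$, to be where the constant $2$ and the rate $1-\tfrac{\sqrt\gamma}{5n}$ are actually pinned; the rest is the routine spectral reduction above. An equivalent but possibly cleaner route replaces the scalar estimate by a common quadratic Lyapunov certificate $\mathbf{A}_{\widetilde{\mu}_i}^{\top}P\,\mathbf{A}_{\widetilde{\mu}_i}\preceq\lambda^2P$ with $\mathrm{cond}(P)\leq4$ for the $2\times2$ companion matrices $\mathbf{A}_{\widetilde{\mu}_i}=\left(\begin{smallmatrix}(1{+}\sigma)\widetilde{\mu}_i&-\sigma\widetilde{\mu}_i\\1&0\end{smallmatrix}\right)$; producing such a $P$ uniformly in $i$ is the same difficulty in another guise.
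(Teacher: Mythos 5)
Your proposal follows the same two-stage reduction as the paper: pass to the lazy matrix $\mM=(1{-}\gamma)\mI+\gamma\mW$ (your $\widetilde{\mW}$), diagonalize, and control a family of scalar second-order recursions whose rate is governed by $\delta(\mM)=\gamma\,\delta(\mW)=\Omega(\gamma n^{-2})$. The difference is in which half is worked out and which half is cited. The paper spends its effort on the spectral-gap input: it bounds the hitting times of the lazy Metropolis--Hastings chain by $6n^2/\gamma$ (via~\cite{nonaka2010hitting}) and converts this to $\delta(\mM)\geq\gamma/(25n^2)$ through the Levin--Peres relation $(\delta(\mM)^{-1}-1)\ln 2\leq 2\max_{i,j}\mcH_{\mM}(i\to j)+1$, and then invokes Lemma~\ref{lem:mixing-matrix-2n}\ref{lem:mixing-equal} (Olshevsky's Lemma~2.5, proved there by generating functions) as a black box for the scalar estimate. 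You do the opposite: you carry out the scalar estimate explicitly via the conjugate roots $r_ie^{\pm\mathrm{i}\theta_i}$ — your identities $\cos\theta_i=\sqrt{x_i}$, $r_i^t=\lambda^t x_i^{t/2}$, $1+\kappa_i^2=(a^2-(2a{-}1)x_i)/(x_i(1-x_i))$, and the monotonicity argument all check out — but you cite the spectral gap as ``$\Omega(n^{-2})$.'' A bonus of your route is that you seed the recursion with the algorithm's actual first iterate $\mX(1)=\big((1{+}\sigma)\mM-\sigma\mI\big)\mX(0)$, whereas the paper's direct appeal to Lemma~\ref{lem:mixing-matrix-2n}\ref{lem:mixing-equal} with $\vv=[\vq^\top,\vq^\top]^\top$ implicitly corresponds to $\mX(1)=\mM\mX(0)$; your handling is the more faithful one.

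The one place your argument is not self-contained is exactly the step you flag: an unquantified $\Omega(n^{-2})$ bound does not close the proof, because your sufficient condition $\delta(\mW)\geq\tfrac{1}{25n^2}+\tfrac{1}{3(5n+\sqrt{\gamma})^2}$ is \emph{strictly stronger} than the $\tfrac{1}{25n^2}$ the constant $5$ in $\sigma$ is nominally tuned to. You need the explicit constant, and this is precisely what the paper's hitting-time computation supplies: $(\delta(\mW)^{-1}-1)\ln 2\leq 12n^2+1$ gives $\delta(\mW)\gtrsim\tfrac{1}{18n^2}$, which exceeds your requirement $\tfrac{1}{25n^2}+\tfrac{1}{75n^2}=\tfrac{1}{18.75\,n^2}$ — but only just, so this verification genuinely has to be done rather than waved at. Splicing the paper's hitting-time derivation into your step in place of the $\Omega(n^{-2})$ citation completes your proof.
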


The result in Theorem~\ref{thm:seg} holds for an arbitrary stepsize $\gamma\in(0,{1}/{2}]$, compared to~\cite{Olshevsky2017LinearTA} that holds for $\gamma=1/2$ only. The auxiliary mixing matrix used by both \textit{SEG} and \textit{SCG} have the same dependence on $\gamma$, so the analysis for \textit{SEG} helps to understand the analysis for \textit{SCG} better.
% \cgreen{Explain what the extension is with respect to [15]. What is new, and how is this an extension. Why is this theorem important, and how does it help us find something unique about Theorem 2. This is the main selling point of the paper. Why the result in Theorem 2 cannot be obtained with previous techniques is why Theorem 1 is there.}

\begin{theorem}[SCG Convergence Analysis]\label{thm:scg} Let compression operator $Q$ satisfies~\eqref{eq:q-comp}, \mbox{$\mY{(0)} = \mX{(0)}$}, \mbox{$\hat{\mX}{(0)}= \vect{0}$}, and $\gamma$, $\sigma$, $\lambda$, and $\mW$ be as Theorem~\ref{thm:seg}. Then, the update rule in~\eqref{eq:update-scg} satisfies the following: for \mbox{$\omega \leq \big(2\big(\kappa_3+\gamma\beta\kappa_2\big)\big(\lambda^{-\frac{1}{2}}+\gamma \beta\kappa_2C\lambda^{-1}(1-\lambda^\frac{1}{2})^{-2}\big)\big)^{-1}$}
\begin{align*}
    \bbE\Psi_{x}(t) \leq C_0\tilde{\lambda}^t \Psi_{x}{(0)},
\end{align*}
where \mbox{$\kappa_2 {=} \sqrt{2\sigma^2{+}2\sigma{+}1}$}, \mbox{$\kappa_3=\sqrt{2\sigma^2{+}2}$}, \mbox{$\beta=\lVert\mW{-}\mI\rVert$}, \mbox{$\tilde{\lambda}{=}1{-}\frac{\sqrt{\gamma}}{10n}$}, \mbox{$\Psi_{x}(t){=}\big\lVert \mX(t) {-} \overline{\mX}\big\rVert_F$}, and constants $C_0,C>0$.
\end{theorem}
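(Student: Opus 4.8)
The plan is to read the SCG recursion \eqref{eq:update-scg} as the SEG recursion \eqref{eq:update-seg} driven by a small additive perturbation produced by the error feedback, and then to close a small-gain argument between the consensus error $\Psi_x$ and the feedback residual. \textbf{Step 1 (reduction to a perturbed SEG).} First I would set $\mathbf{R}(t):=\mX(t)-\hat{\mX}(t)$ (the residual carried in the error feedback) and $\mathbf{S}(t):=\mathbf{R}(t)-Q(\mathbf{R}(t))$ (the one-step compression error), so that \eqref{eq:q-comp} gives $\bbE\big[\lVert\mathbf{S}(t)\rVert_F^2\,\big|\,\mathcal{F}_t\big]\le\omega^2\lVert\mathbf{R}(t)\rVert_F^2$, where $\mathcal{F}_t$ is the $\sigma$-field generated by the compression randomness of iterations $0,\dots,t-1$. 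The first line of \eqref{eq:update-scg} then reads $\hat{\mX}(t{+}1)=\mX(t)-\mathbf{S}(t)$, and substituting it into the other two lines shows $(\mX,\mY)$ obeys exactly \eqref{eq:update-seg} with the term $\mathbf{P}(t):=-\gamma(\mW-\mI)\mathbf{S}(t)$ added to the $\mY$-update. Since $\vect{1}^\top(\mW-\mI)=\vect{0}$, every $\mathbf{P}(t)$ lies in the mean-zero subspace and $\lVert\mathbf{P}(t)\rVert_F\le\gamma\beta\lVert\mathbf{S}(t)\rVert_F$; in particular $\overline{\mX}$ stays invariant and it suffices to control $\mathbf{Z}(t):=\mX(t)-\overline{\mX}$ and $\mathbf{V}(t):=\mY(t)-\overline{\mX}$, which follow the SEG dynamics on the mean-zero subspace plus the forcing $\mathbf{P}(t)$.

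\textbf{Step 2 (variation-of-constants bound for $\Psi_x$).} Because \eqref{eq:update-seg} is linear, the perturbed trajectory splits into the unperturbed SEG trajectory — for which Theorem~\ref{thm:seg} already gives the $2\lambda^t$ decay — plus a superposition over $s=0,\dots,t-1$ of homogeneous SEG trajectories, each launched at time $s$ by the pulse $\mathbf{P}(s)$. Re-running the eigenvalue/Chebyshev estimate behind Theorem~\ref{thm:seg} for an \emph{arbitrary} mean-zero launch state of the induced second-order (momentum) recursion — not only the initialization $\mY(0)=\mX(0)$ handled there — and tracking the momentum weights $1{+}\sigma,\sigma$ as the pulse is pushed forward, I expect a bound of the shape
\begin{align*}
\bbE\Psi_x(t)\;\le\;2\lambda^t\Psi_x(0)\;+\;\kappa_3\,C\,\gamma\beta\sum_{s=0}^{t-1}\lambda^{\,t-1-s}\,\bbE\lVert\mathbf{S}(s)\rVert_F,
\end{align*}
with $\kappa_3=\sqrt{2\sigma^2{+}2}$ and an absolute constant $C$ originating from the non-diagonalizable part of the propagator.

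\textbf{Step 3 (residual recursion and small gain).} Then I would bound the residual itself: writing $\mathbf{R}(t{+}1)=\big(\mX(t{+}1)-\mX(t)\big)+\mathbf{S}(t)$ and expanding the increment $\mX(t{+}1)-\mX(t)$ through $\mathbf{V}(t{+}1),\mathbf{V}(t),\mathbf{V}(t{-}1)$ (which brings in the factor $\kappa_2=\sqrt{2\sigma^2{+}2\sigma{+}1}$), together with $\bbE[\lVert\mathbf{S}(t)\rVert_F\mid\mathcal{F}_t]\le\omega\lVert\mathbf{R}(t)\rVert_F$, yields an inequality expressing $\bbE\lVert\mathbf{R}(t{+}1)\rVert_F$ through $\bbE\Psi_x$ at nearby times and $\omega\,\bbE\lVert\mathbf{R}(t)\rVert_F$, with coefficients assembled from $\kappa_2,\kappa_3$ and $\gamma\beta$. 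Introducing the $\lambda^{1/2}$-normalized quantities $a:=\sup_{t}\lambda^{-t/2}\bbE\Psi_x(t)$ and $r:=\sup_{t}\lambda^{-t/2}\bbE\lVert\mathbf{R}(t)\rVert_F$ and summing the geometric series from Step 2 (which is what produces the factors $\lambda^{-1/2},\lambda^{-1}$ and $(1{-}\lambda^{1/2})^{-1},(1{-}\lambda^{1/2})^{-2}$), the two estimates become a $2\times2$ linear system of the form $a\le 2\Psi_x(0)+g_{1}\,\omega\, r$ and $r\le g_{2}\,a+g_{3}\,\omega\, r$; the hypothesis on $\omega$ in the statement is exactly the condition that the loop gain be at most $\tfrac12$, which solves the system as $a\le C_0\Psi_x(0)$. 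Finally $\tilde{\lambda}^2=1-\tfrac{\sqrt\gamma}{5n}+\tfrac{\gamma}{100n^2}>\lambda$ gives $\lambda^{t/2}\le\tilde{\lambda}^{\,t}$, hence $\bbE\Psi_x(t)\le a\,\lambda^{t/2}\le C_0\,\tilde{\lambda}^{\,t}\,\Psi_x(0)$.

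\textbf{Main obstacle.} The hard part is Step 2: extending the Chebyshev-type contraction of Theorem~\ref{thm:seg} so that it propagates an arbitrary injected mean-zero pulse through the second-order momentum recursion with geometric rate $\lambda$ and explicit $\sigma$-dependent constants (the genuine source of $\kappa_2,\kappa_3,C$), while arranging the geometric summation in Step 3 so that the rate degrades only from $\lambda$ to $\tilde{\lambda}=1-\tfrac{\sqrt\gamma}{10n}$ — i.e.\ only the factor-of-two loss consistent with the $\lambda^{1/2}$ normalization — and the feedback gains stay below one, which is precisely what the bound on $\omega$ enforces. A minor point to dispatch is that $\hat{\mX}(0)=\vect{0}$ makes the initial residual $\mathbf{R}(0)=\mX(0)$ carry the mean component, which is absorbed into $C_0$ (equivalently, by the w.l.o.g.\ centering $\overline{\vx}=\vect{0}$).
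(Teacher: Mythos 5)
Your proposal is correct and follows essentially the same route as the paper: the paper stacks two consecutive iterates into $\mZ(t)$, unrolls the update by variation of constants, and closes the same coupled small-gain system between the consensus error and the error-feedback residual, with the pulse-propagation estimate you flag as the main obstacle being exactly Lemma~\ref{lem:mixing-matrix-2n}\ref{lem:mixing-zero-mean} (proved via generating functions for the second-order momentum recursion). The only imprecision is that this pulse decay carries an extra linear factor, $C s\lambda^{s}$ rather than $C\lambda^{s}$, owing to the defective eigenvalue of the momentum propagator; your argument already budgets for this through the $(1-\lambda^{1/2})^{-2}$ term, so nothing breaks.
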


The above theorem implies a linear convergence for Algorithm~\ref{alg:scg} with rate $\tilde{\lambda}$ dependent on \mbox{$\gamma^{-{1}/{2}} n$} under a bounded compression ratio $\omega$, where the bound on $\omega$ can be written as $\Theta\big((1{+}\gamma)^{-1}n^{-2}\big)$.
% \cgreen{Explain here that that upper bound on Theorem 2 can be written as XX and copy the O notation bound from the table.}
The above bound suggests that the consensus step-size $\gamma$ imposes a trade-off between the convergence rate and the compression ratio $\omega$. The proofs for both theorems are presented in Section~\ref{sec:analysis}.

% \cgreen{Briefly describe the implications and meaning of the theorem. For example, that tilde $\lambda$ is the effective convergence rate, and that omega is bounded, etc. some discussion about the theorem is necessary. Say that the proofs are presented in Section X.}

\section{Convergence Analysis}
\label{sec:analysis}
Before stating the proofs, we propose a technical lemma that will help us prove Theorems~\ref{thm:seg} and~\ref{thm:scg}.
\begin{lemma}\label{lem:mixing-matrix-2n}
Let matrix {$\mA\in\bbR^{n\times n}$} be symmetric, doubly stochastic, and diagonally dominant with \mbox{$\lambda_2(\mA)\leq 1{-}\frac{1}{p^2}$}, for some $p>1$. 
% \cgreen{DO we really need to say what Lambda is here? the results seems not to depend on that specific value of lambda}
and \mbox{$\mB\in \bbR^{2n\times 2n}$} be as follows:
\begin{align*}
    \mB = \begin{bmatrix} (1{+}\sigma) \mA & -\sigma\mA\\
    \mI & \vect{0}\end{bmatrix}.
\end{align*}
Let \mbox{$\lambda= 1{-}\frac{1}{p}$} and \mbox{$\sigma=\frac{p-1}{p+1}$}, then following statements hold:
\begin{enumerate}[label=(\alph*)]
    \item\label{lem:mixing-equal}\cite[Lemma~2.5]{Olshevsky2017LinearTA} If \mbox{$\vv = [\vq^\top,\vq^\top]^\top$} and \mbox{$\overline{\vv} = [\overline{\vq}^\top,\overline{\vq}^\top]^\top$}, for an arbitrary \mbox{$\vq\in\bbR^n$} with \mbox{$\overline{\vq} = \frac{\vect{1}\vect{1}^\top}{n}\vq$},  then \mbox{$t\geq 0$},
    \begin{align*}
        \lVert \mB^t \vv - \overline{\vv}\rVert \leq 2\lambda^t\lVert \vv - \overline{\vv}\rVert.
    \end{align*}
    \item\label{lem:mixing-zero-mean} If \mbox{$\vv = [\vq^\top,\vect{0}^\top]^\top$}, for $\vq\in\bbR^n$ such that $\vect{1}^\top\vq=0$, then for all \mbox{$t\geq 0$},
    \begin{align*}
        \lVert \mB^t \vv \rVert \leq C t\lambda^t,
    \end{align*}
    where $C>0$ is some constant.
\end{enumerate}
\end{lemma}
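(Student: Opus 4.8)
Only part~\ref{lem:mixing-zero-mean} requires an argument, since part~\ref{lem:mixing-equal} is quoted verbatim from~\cite{Olshevsky2017LinearTA}. The plan is to block-diagonalize $\mB$ along the eigenbasis of $\mA$, reducing the claim to a uniform estimate on a family of $2\times 2$ matrices, while using that $\vv$ is supported on the first block and orthogonal to the consensus direction. First I would record the spectral structure of $\mA$: being symmetric, doubly stochastic, and diagonally dominant, its diagonal entries satisfy $\mA_{ii}\ge\tfrac12$, so Gershgorin's theorem gives $\mA\succeq\vect{0}$, and combined with $\lambda_2(\mA)\le1-\tfrac1{p^2}$ this yields an orthonormal eigenbasis $\boldsymbol{u}_1=\vect{1}/\sqrt{n},\boldsymbol{u}_2,\dots,\boldsymbol{u}_n$ of $\mA$ with eigenvalues $\mu_1=1$ and $\mu_i\in[0,1-\tfrac1{p^2}]$ for $i\ge2$. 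For each $i$ the subspace $V_i=\operatorname{span}\{[\boldsymbol{u}_i^\top,\vect{0}^\top]^\top,[\vect{0}^\top,\boldsymbol{u}_i^\top]^\top\}$ is $\mB$-invariant, $\mB$ acts on $V_i$ (in that ordered basis) as the $2\times2$ matrix $\mB_i=\left[\begin{smallmatrix}(1+\sigma)\mu_i & -\sigma\mu_i\\ 1 & 0\end{smallmatrix}\right]$, and $\bbR^{2n}=\bigoplus_i V_i$ is an orthogonal decomposition. Since $\vect{1}^\top\vq=0$, writing $\vq=\sum_{i\ge2}c_i\boldsymbol{u}_i$ gives $\vv=\sum_{i\ge2}c_i[\boldsymbol{u}_i^\top,\vect{0}^\top]^\top$ and, by orthogonality of the $V_i$ together with $\lVert\mB^t[\boldsymbol{u}_i^\top,\vect{0}^\top]^\top\rVert\le\lVert\mB_i^t\rVert$, we obtain $\lVert\mB^t\vv\rVert^2=\sum_{i\ge2}c_i^2\,\lVert\mB^t[\boldsymbol{u}_i^\top,\vect{0}^\top]^\top\rVert^2\le\left(\max_{i\ge2}\lVert\mB_i^t\rVert^2\right)\lVert\vv\rVert^2$. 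Everything thus reduces to bounding $\lVert\mB_i^t\rVert$ uniformly over $\mu_i\in[0,1-\tfrac1{p^2}]$.

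Next I would analyze $\mB_i$. Its characteristic polynomial is $z^2-(1+\sigma)\mu_i z+\sigma\mu_i$, with discriminant $\mu_i((1+\sigma)^2\mu_i-4\sigma)$; substituting $\sigma=\tfrac{p-1}{p+1}$ gives $\tfrac{4\sigma}{(1+\sigma)^2}=1-\tfrac1{p^2}$, so over the allowed range the discriminant is $\le0$, the eigenvalues form a complex-conjugate pair $re^{\pm i\theta}$ (a real double root $z=\lambda$ at the right endpoint), and $r=\sqrt{\sigma\mu_i}\le\sqrt{\sigma(1-\tfrac1{p^2})}=1-\tfrac1p=\lambda$. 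By Cayley--Hamilton, $\mB_i^t=\alpha_t\mB_i+\beta_t\mI$ with $\alpha_t=r^{t-1}\tfrac{\sin(t\theta)}{\sin\theta}$ and $\beta_t=-\sigma\mu_i\,r^{t-2}\tfrac{\sin((t-1)\theta)}{\sin\theta}$, read by continuity as $\alpha_t=t r^{t-1}$, $\beta_t=-(t-1)\sigma\mu_i r^{t-2}$ when $\theta=0$, and with $\mB_i^t=\vect{0}$ for $t\ge2$ when $\mu_i=0$. The elementary inequality $|\sin(k\theta)|\le k|\sin\theta|$, valid for every integer $k\ge0$, then gives $|\alpha_t|\le t\lambda^{t-1}$ and $|\beta_t|\le t r^{t}\le t\lambda^{t-1}$ for $t\ge1$, hence $\lVert\mB_i^t\rVert\le t\lambda^{t-1}(\lVert\mB_i\rVert+1)$; and since $\lVert\mB_i\rVert\le\lVert\mB_i\rVert_F\le\sqrt{6}$ (using $\sigma<1$ and $\mu_i\le1$), we get $\lVert\mB_i^t\rVert\le\tfrac{\sqrt{6}+1}{\lambda}\,t\lambda^{t}$. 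Feeding this into the orthogonal decomposition yields $\lVert\mB^t\vv\rVert\le C\,t\lambda^{t}$ with, for instance, $C=\tfrac{\sqrt{6}+1}{\lambda}\lVert\vv\rVert$, in which $\lVert\vv\rVert$ is part of the fixed data of the lemma. (At $t=0$ the asserted inequality would read $\lVert\vv\rVert\le0$, so the statement is to be read for $t\ge1$; replacing $t$ by $1+t$ throughout removes this edge case and changes nothing in the applications to Theorems~\ref{thm:seg} and~\ref{thm:scg}.)

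The main obstacle is this $2\times2$ estimate, and specifically the linear-in-$t$ factor it must produce. At $\mu_i=1-\tfrac1{p^2}$ the block $\mB_i$ is \emph{defective} --- a genuine Jordan block with double eigenvalue $\lambda$ --- so one cannot merely diagonalize and bound $\lVert\mB_i^t\rVert$ by the $t$-th power of the spectral radius: the eigenvector matrix degenerates as $\mu_i\uparrow1-\tfrac1{p^2}$ and its condition number is unbounded. The explicit power formula together with the Dirichlet-type inequality $|\sin(k\theta)|\le k|\sin\theta|$ is exactly what captures this near-degeneracy \emph{uniformly} in $\mu_i$ and produces the $t\lambda^t$ rate. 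The other point that needs care is checking that the choice $\sigma=\tfrac{p-1}{p+1}$ makes $1-\tfrac1{p^2}$ precisely the discriminant threshold, so that $r=\sqrt{\sigma\mu_i}\le\lambda$ holds throughout the eigenvalue range of $\mA$ --- this is what ties the momentum parameter to the spectral-gap hypothesis $\lambda_2(\mA)\le1-\tfrac1{p^2}$.
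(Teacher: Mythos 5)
Your proof is correct and follows essentially the same route as the paper: both reduce part~\ref{lem:mixing-zero-mean} via the eigendecomposition of $\mA$ to a uniform bound on the $2\times 2$ blocks, and then compute the $t$-th power of each block exactly --- the paper by solving the second-order recurrence~\eqref{eq:recursive-seq} through its generating function~\eqref{eq:generating-func}, you by Cayley--Hamilton with the conjugate-pair formula $r^{t-1}\sin(t\theta)/\sin\theta$, which is the same computation. Your write-up is in fact more explicit about the two points the paper's sketch leaves implicit: that diagonal dominance forces $\mu_i\geq 0$ so the discriminant is nonpositive over the whole admissible range, and that $|\sin(k\theta)|\leq k|\sin\theta|$ is what delivers the uniform $t\lambda^t$ factor near the defective endpoint $\mu_i=1-p^{-2}$ (and you rightly note the stated bound must be read for $t\geq 1$).
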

\begin{proof}[Proof sketch for Lemma~\ref{lem:mixing-matrix-2n}]
Similar to\cite[Lemma~2.3]{Olshevsky2017LinearTA}, by considering the SVD-decomposition of $\mA$, the problem reduces to show the convergence of \mbox{$[\mB(\lambda)]^t\vr$}, for \mbox{$\vr = [1,1]^\top$}, and \mbox{$\vr = [1,0]^\top$}, where \mbox{$\mB(\lambda_i)=$\resizebox{0.24\linewidth}{!}{$\begin{bmatrix} (1{+}\sigma) \lambda_i & -\sigma\lambda_i\\ 1 & 0\end{bmatrix}$}} is a \mbox{$2\times 2$} matrix, for $i\in\{2,3,\dots,n\}$, and \mbox{$1-\lambda_i\geq {p^{-2}}$}.
\vspace{0.1em}

The analysis in~\cite{Olshevsky2017LinearTA} shows the convergence for \mbox{$\vr = [1,1]^\top$}, but their method is restricted to vectors $\vr$ with the same two elements. However, this is not the case for Lemma~\ref{lem:mixing-matrix-2n}\ref{lem:mixing-zero-mean}, thus we consider an alternative technique. Note that \mbox{$[\mB(\lambda)]^t\vr$} implies a recursive sequence with the following definition:
\begin{align}\label{eq:recursive-seq}
    a(t) = (1{+}\sigma)\lambda\, a({t{-}1}) - \sigma\lambda\, a({t{-}2}), \quad \text{for all } t\geq 2,
\end{align}
with $a(1)=1$, and either \mbox{$a(0)=0~\text{or}~1$}. To find $a(t)$, we consider its corresponding generating function
\begin{align}\label{eq:generating-func}
    G(x) = \frac{[a(1)-(1{+}\sigma)\lambda\,a(0)]\,x+a(0)}{\sigma\lambda\,x^2 - (1{+}\sigma)\lambda\,x + 1},
\end{align}
where one can find the exact form of $a(t)$ given the choices for $a(1)$ and $a(0)$. The exact solution for $a(t)$ completes the proof for Lemma~\ref{lem:mixing-matrix-2n}.
\end{proof}

We need Lemma~\ref{lem:mixing-matrix-2n} in the proof for both theorems, and Lemma~\ref{lem:mixing-matrix-2n}\ref{lem:mixing-zero-mean} for Theorem~\ref{thm:scg}.
% \cgreen{Discuss why this lemma is useful and how are we going to use it later.}
Next, we show the proof sketch for Theorem~\ref{thm:seg}.

\begin{proof}[Proof sketch for Theorem~\ref{thm:seg}]
Let \mbox{$\mM=(1{-}\gamma)\mI+\gamma\mW$}, be a lazy version of $\mW$ defined in~\eqref{eq:lazy-metropolis-matrix}, thus $\mM$ is also a doubly stochastic matrix with \mbox{$\delta(\mM)=\gamma\delta(\mW)$}. We seek to derive a lower bound of \mbox{$\mcO\big({\gamma}/{n^2}\big)$} on the spectral gap of matrix $\mM$. % and then show that given the value for $\sigma$ introduced in Theorem~\ref{thm:seg}, the update rule in~\eqref{eq:update-seg} has a linear rate that depends linearly on $n$.
Our proof follows the structure of~\mbox{\cite[Theorem~2.1]{Olshevsky2017LinearTA}}, but we consider an arbitrary $\gamma\in(0,1/2]$, which will also be used for Theorem~\ref{thm:scg}. Note that a doubly stochastic matrix can be interpreted as a Markov chain's transition matrix. Now, assume that $\mM$ is the transition matrix associated with a Markov chain. We know that $\mM$ is a convex combination of $\mI$ and $\mW$, which implies with probability $\gamma$, the matrix $\mW$ determines the transitions of the chain. Hence, using the result in~\cite{nonaka2010hitting}, we can infer that the following property holds for the hitting time\footnote{For a Markov Chain with transition matrix $\mW$, hitting time \mbox{$\mcH_{\mW}\left(i\rightarrow j\right)$} indicates the expected number of steps for the chain to reach state $j$ starting from state $i$.} of $\mM$~\cite{levin2017markov}:
\begin{align}~\label{eq:hitting-time-M}
    \max_{i,j\in[n]} \mcH_{\mM}(i \rightarrow j) \leq \frac{6n^2}{\gamma}.
\end{align}

Moreover, by~\mbox{\cite[Theorem~12.4 and Theorem~10.14]{levin2017markov}},
\begin{align}\label{eq:hitting-time-inequalities}
    \left(\frac{1}{\delta(\mM)}{-}1\right)\ln{2}&\leq 2\max_{i,j\in[n]} \mcH_{\mM}(i \rightarrow j)+1,
\end{align}
so, due to~\eqref{eq:hitting-time-M} and~\eqref{eq:hitting-time-inequalities}, we have \mbox{$\delta(\mM)\geq {\gamma}/{25n^2}$}. The rest of the proof is an immediate result of Lemma~\ref{lem:mixing-matrix-2n}\ref{lem:mixing-equal}.
\end{proof}

\begin{figure*}[!ht]
    \centering
    \begin{minipage}{0.33\textwidth}
    \includegraphics[width=1\linewidth]{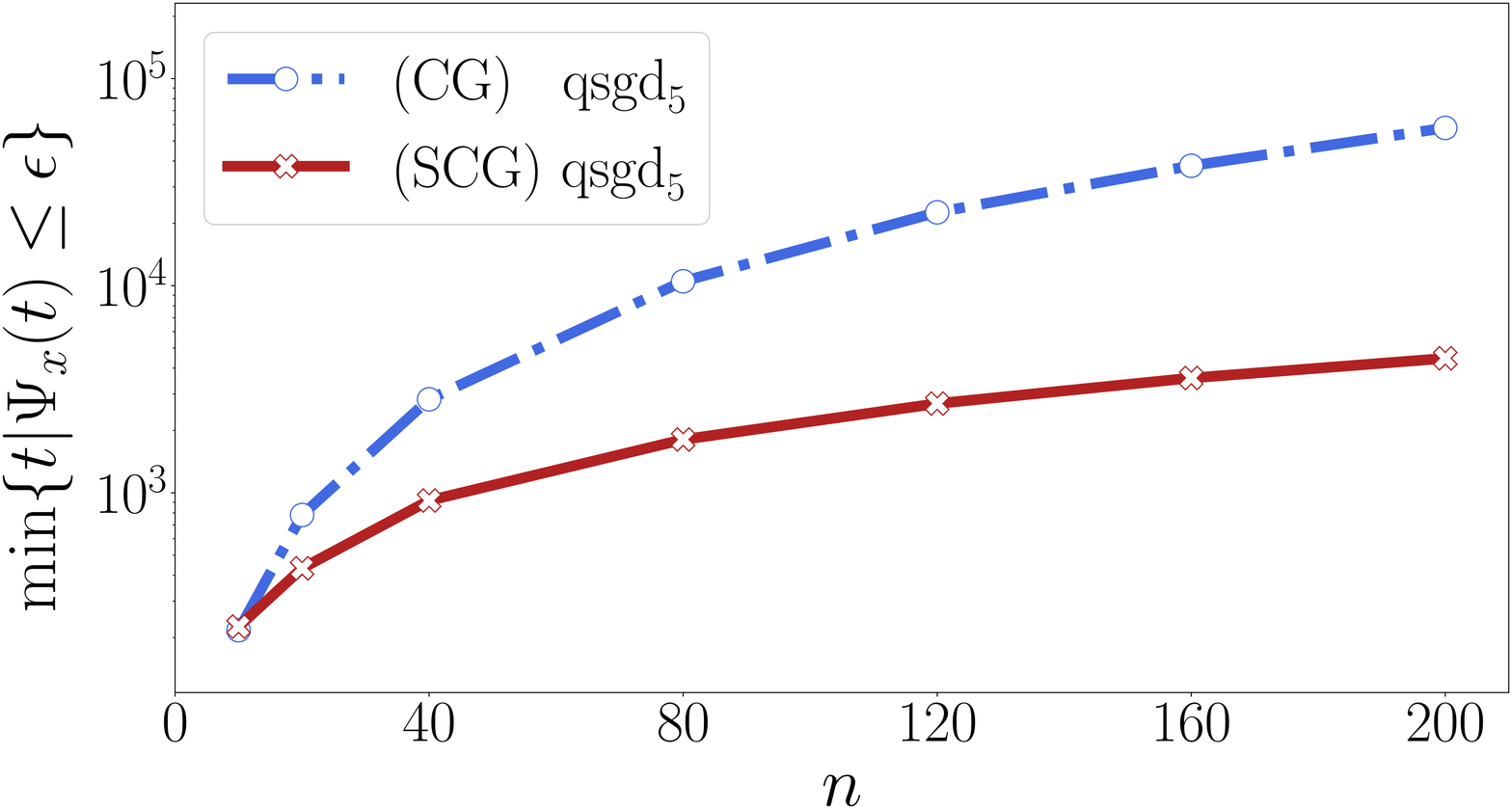}
    \subcaption{Path graphs with size $n$ from $10$ to $200$}
    \label{fig:gossip-qsgd-n-iteration}
    \end{minipage}
    \hspace{0.2em}
    \vline
    \hspace{0.2em}
    \begin{minipage}{0.62\textwidth}
    \includegraphics[width=0.5295\linewidth]{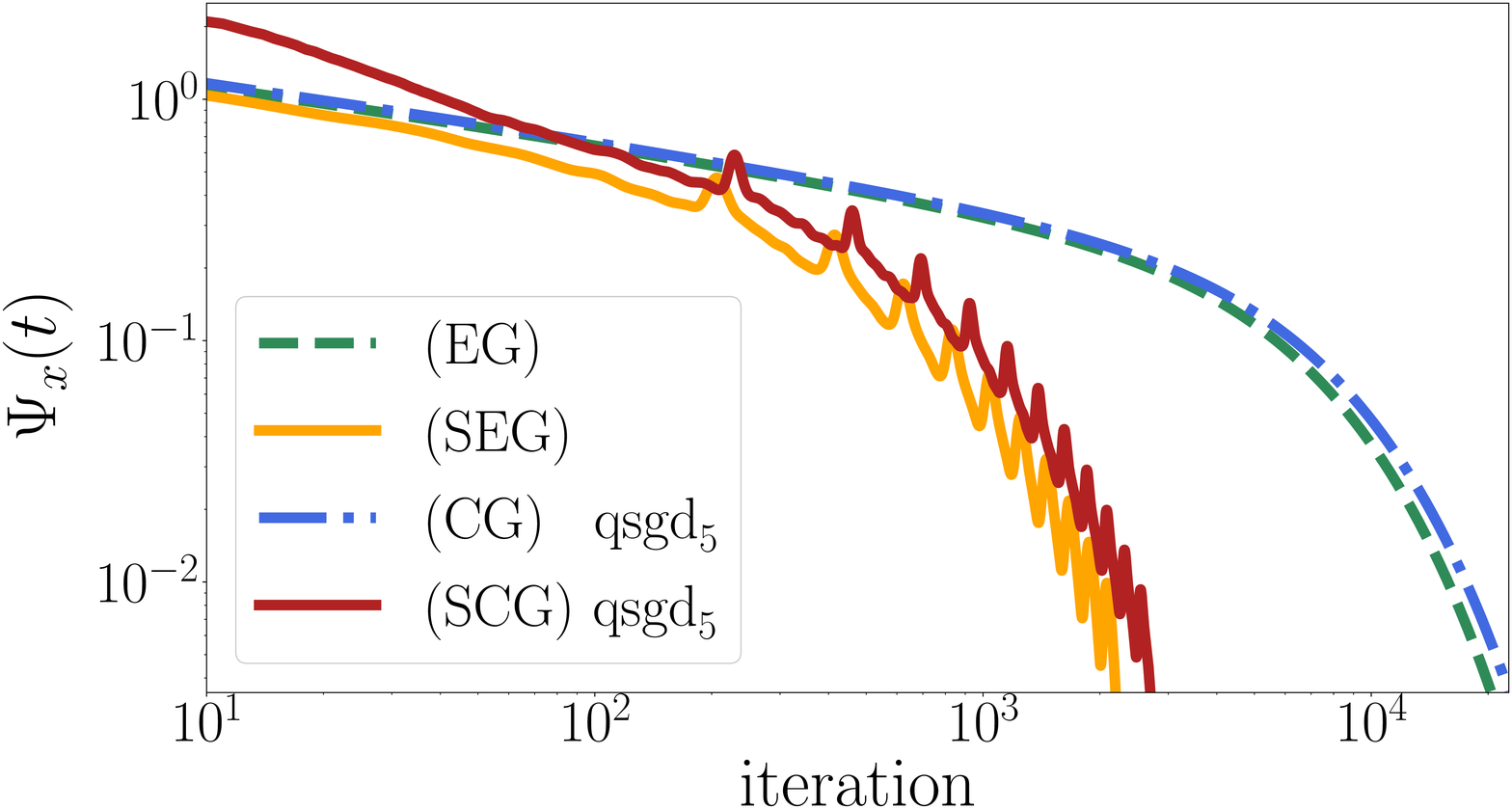}% size 18 to 17
    \includegraphics[width=0.5\linewidth]{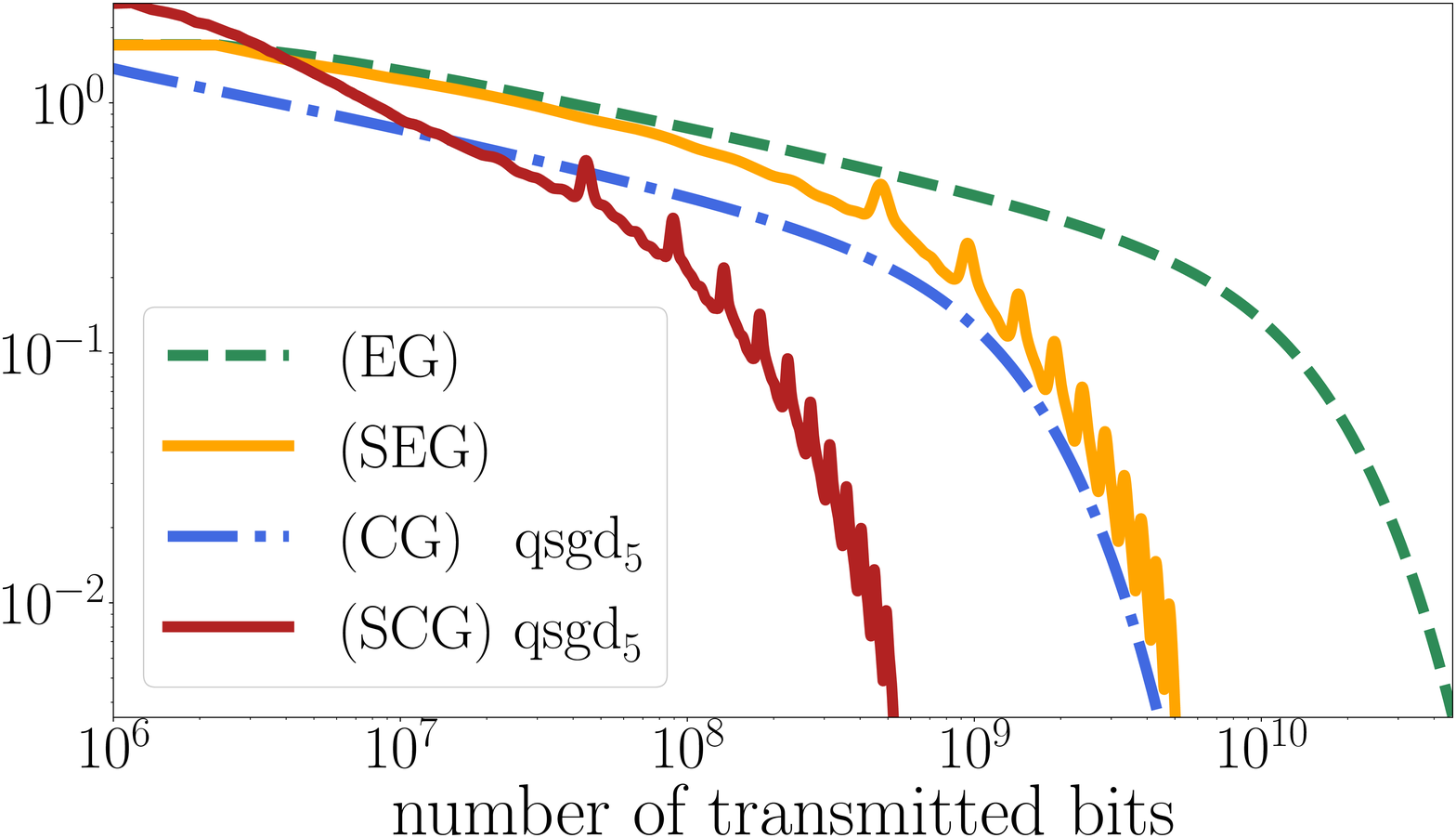}
    \subcaption{Ring network with size $n=120$}
    \label{fig:gossip-qsgd-iteration-bit}
    \end{minipage}
    \caption{\textbf{Scalability Numerical Analysis:} Each experiment is the average of 10 runs. \textbf{(a)} Comparison between the number of iteration required for \textit{CG} and \textit{SCG} to reach an \mbox{$\epsilon$-convergence} on the average consensus problems with \mbox{$d=150$}, for path networks with $n$ ranging from $10$ to $200$,  \mbox{$\mathrm{qsgd}_{5}$}, and $\epsilon=10^{-4}$. \textbf{(b)} Comparison of the $\epsilon$-suboptimality for algorithms in Table~\ref{tab:gossip-comparison}, for an average consensus problem with $d=150$, $\mathrm{qsgd}_{5}$, $\epsilon=10^{-4}$, over a ring graph with size $n=120$ based on the number of iterations (left) and the number of transmitted bits (right).}
    \label{fig:gossip-qsgd}
\end{figure*}

We now present the proof for Theorem~\ref{thm:scg}.

\begin{proof}[Proof for Theorem~\ref{thm:scg}]
Let us define $2\times 2$ matrices $\mT_1$, $\mT_2$, and $\mT_3$ as follows:
\begin{align}\label{eq:two-by-two-matrices}
% \begin{alignedat}{2}
    \mT_1 &= \begin{bmatrix}
    1{+}\sigma & -\sigma\\
    1 & 0
    \end{bmatrix}, \quad &&\\
    \mT_2 &= \begin{bmatrix}
    1{+}\sigma & -\sigma\\
    0 & 0
    \end{bmatrix}, \quad &&\kappa_2 = \lVert\mT_2\rVert = \sqrt{2\sigma^2{+}2\sigma{+}1},\nonumber\\
    \mT_3 &= \begin{bmatrix}
    \sigma & -\sigma\\
    1 & -1
    \end{bmatrix}, \quad &&\kappa_3=\lVert\mT_3\rVert = \sqrt{2\sigma^2{+}2}.\nonumber
% \end{alignedat}
\end{align}
We furthermore define $\mZ(t)$, $\hat{\mZ}(t)$, and $\overline{\mZ}$, as
\begin{align}\label{eq:z-definition}
\begin{split}
    \mZ(t) = 
    \begin{bmatrix}
    \mX(t)\\
    \mX{(t{-}1)}
    \end{bmatrix}, \,\,\,
    \hat{\mZ}(t) = 
    \begin{bmatrix}
    \hat{\mX}(t)\\
    \hat{\mX}{(t{-}1)}
    \end{bmatrix}, \,\,\,
    \overline{\mZ} = 
    \begin{bmatrix}
    \vspace{-0.2em}\overline{\mX}\\
    \overline{\mX}
    \end{bmatrix},
\end{split}
\end{align}
with initialization \mbox{$\hat{\mX}{(0)}= \vect{0}$} and \mbox{$\mX{(-1)} = \mX{(0)}$}. Therefore, the update rule in~\eqref{eq:update-scg} can be rewritten as follows:
\begin{align}\label{eq:z-update}
    &\mZ(t{+}1)\\
    &= [\mT_1\otimes\mI] \mZ(t) + \gamma[\mT_2\otimes\left(\mW{-}\mI\right)]\hat{\mZ}(t{+}1)\nonumber\\
    &= \mB\mZ(t) + \gamma[\mT_2\otimes\left(\mW{-}\mI\right)]\big(\hat{\mZ}(t{+}1){-}\mZ(t)\big)\nonumber\\
    &= \mB^{t{+1}}\mZ{(0)} {+} \gamma\sum_{s=0}^{t}\mB^s[\mT_2{\otimes}(\mW{-}\mI)](\hat{\mZ}{(t{-}s{+}1)}{-}\mZ{(t{-}s)}).\nonumber
\end{align}
% \cgreen{check that there is a period at the end of every equation block before a paragraph. Don't miss above.}

We now define the Lyapunov functions $\mcR_{z}(t)$ and $\mcU_{z}(t)$ as
\begin{align*}
  \mcR_{z}(t) \triangleq \big\lVert \mZ(t) - \overline{\mZ}\big\rVert_F, \quad \mcU_{z}(t) \triangleq \big\lVert \hat{\mZ}(t{+}1) - \mZ(t) \big\rVert_F,
\end{align*}
and bound them using Lemma~\ref{lem:mixing-matrix-2n}. First, we have
\begin{align}\label{eq:acg-proof-1}
    % &\big\lVert \mZ(t{+}1)-\overline{\mZ}\big\rVert_F\nonumber\\
    % \overset{\eqref{eq:z-update}}&{=}\big\lVert \mB^{t{+}1}\mZ{(0)}{-}\overline{\mZ}\nonumber\\
    % &{+}\gamma\sum_{s=0}^{t}\mB^s[\mT_2\otimes(\mW{-}\mI)]\big(\hat{\mZ}{(t{-}s{+}1)}-\mZ{(t{-}s)}\big) \big\rVert_F \nonumber\\
    % \overset{\text{\tiny tri. ineq.}}&{=}\big\lVert \mB^{t{+}1}\mZ{(0)}{-}\overline{\mZ}\big\rVert\nonumber\\
    &\big\lVert \mZ(t{+}1)-\overline{\mZ}\big\rVert_F
    \overset{\text{\tiny tri. ineq.}}{=}\big\lVert \mB^{t{+}1}\mZ{(0)}{-}\overline{\mZ}\big\rVert\nonumber\\%
    &+\gamma\sum_{s=0}^{t}\big\lVert\mB^s[\mT_2\otimes(\mW{-}\mI)]\big(\hat{\mZ}{(t{-}s{+}1)}-\mZ{(t{-}s)}\big) \big\rVert_F \nonumber\\
    \overset{\text{\tiny Lemma\,\ref{lem:mixing-matrix-2n}\ref{lem:mixing-equal},\,\ref{lem:mixing-zero-mean}}}&{\leq} 2\lambda^{t{+}1} \big\lVert \mZ{(0)} \big\rVert_F \nonumber\\
    &+\gamma \beta\kappa_2 C\sum_{s=0}^{t} s\lambda^s \big\lVert\hat{\mZ}{(t{-}s{+}1)}-\mZ{(t{-}s)}\big\rVert_F.
\end{align}

Using the definition of $\hat{\mX}(t)$ in~\eqref{eq:update-scg}, we also have
\begin{align}\label{eq:acg-proof-2-1}
    &\bbE \big\lVert \mZ(t{+}1)-\hat{\mZ}{(t+2)}\big\rVert_F^2\nonumber\\ \overset{\eqref{eq:z-definition}}&{=} \bbE \big\lVert \mX(t{+}1) - \hat{\mX}(t{+}2) \big\rVert_F^2 + \bbE \big\lVert \mX(t) - \hat{\mX}(t{+}1)\big\rVert_F^2\nonumber\\
    % &= \bbE \big\lVert \mX(t{+}1)-\hat{\mX}(t{+}1)-Q\left(\mX(t{+}1)-\hat{\mX}(t{+}1)\right) \big\rVert_F^2\nonumber\\
    % &+ \bbE \big\lVert \mX(t)-\hat{\mX}(t)-Q\left(\mX(t)-\hat{\mX}(t)\right) \big\rVert_F^2\nonumber\\
    &\overset{\eqref{eq:q-comp}}{\leq} \omega^2\left[\big\lVert \mX(t{+}1)-\hat{\mX}(t{+}1)\big\rVert_F^2 + \big\lVert \mX(t)-\hat{\mX}(t)\big\rVert_F^2\right]\nonumber\\
    &= \omega^2\big\lVert \mZ(t{+}1)-\hat{\mZ}(t{+}1)\big\rVert_F^2,
\end{align}
where according to Jensen's inequality, %\mbox{$\left(\bbE \lVert \vx \rVert\right)^2 \leq \bbE \lVert \vx \rVert^2$},
and~\eqref{eq:acg-proof-2-1} we have
\begin{align}\label{eq:acg-proof-2-2}
\bbE \big\lVert \mZ(t{+}1)-\hat{\mZ}{(t+2)}\big\rVert_F \leq \omega\big\lVert \mZ(t{+}1)-\hat{\mZ}(t{+}1)\big\rVert_F.
\end{align}
Hence, we need to bound $\big\lVert \mZ(t{+}1)-\hat{\mZ}(t{+}1)\big\rVert_F$, as follows:
\begin{align}\label{eq:acg-proof-2-3}
& \big\lVert \mZ(t{+}1)-\hat{\mZ}(t{+}1)\big\rVert_F\nonumber\\ \overset{\eqref{eq:z-update}}&{=} \big\lVert [\mT_1\otimes\mI] \mZ(t) + \gamma[\mT_2\otimes\left(\mW{-}\mI\right)]\hat{\mZ}(t{+}1) - \hat{\mZ}(t{+}1)\big\rVert_F \nonumber\\
&= \big\lVert [\mI_{2n}+\gamma\mT_2\otimes\left(\mI-\mW\right)]\left(\mZ(t)-\hat{\mZ}(t{+}1)\right)\nonumber\\
&+ [\mT_3\otimes\mI_n + \gamma T_2 \otimes\left(\mW{-}\mI_n\right)] \left(\mZ(t)-\overline{\mZ}\right)\big\rVert_F\nonumber\\
% \overset{\text{\tiny tri. ineq.}}&{\leq} \big\lVert \mI_{2n}+\gamma\mT_2\otimes\left(\mI-\mW\right)\big\rVert\big\lVert\mZ(t)-\hat{\mZ}(t{+}1)\big\rVert_F\nonumber\\
% &+ \big\lVert \mT_3\otimes\mI_n + \gamma T_2 \otimes\left(\mW{-}\mI_n\right)\big\rVert \big\lVert \mZ(t)-\overline{\mZ}\big\rVert_F\nonumber\\
\overset{\eqref{eq:two-by-two-matrices}}&{\leq} (1+\gamma\beta\kappa_2)\big\lVert\mZ(t)-\hat{\mZ}(t{+}1)\big\rVert_F\nonumber\\
&+ (\kappa_3+\gamma\beta\kappa_2) \big\lVert \mZ(t)-\overline{\mZ}\big\rVert_F.
\end{align}

Based on~\eqref{eq:acg-proof-1},~\eqref{eq:acg-proof-2-1}, and~\eqref{eq:acg-proof-2-2}, we have:
\begin{align*}
\bbE\mcR_{z}(t{+}1) &\leq 2 \lambda^{t{+}1} \big\lVert\mZ{(0)}\big\rVert_F + \gamma\beta\kappa_2C\sum\limits_{s=0}^{t}s\lambda^s \,\mcU_{z}{(t{-}s)},\\
\bbE\,\mcU_{z}(t{+}1) &\leq \omega(1{+}\gamma\beta\kappa_2) \,\mcU_{z}(t)+\omega(\kappa_3{+}\gamma\beta\kappa_2)\,\mcR_{z}(t).
\end{align*}
Let \mbox{$\nu  = \omega(\kappa_3+\gamma\beta\kappa_2)$}, \mbox{$\nu '=\omega(1+\gamma\beta\kappa_2)$}, where \mbox{$\nu \geq\nu'$}. We now by induction show that for 
\begin{align}
    \omega \leq \frac{1}{2(\kappa_3+\gamma\beta\kappa_2)(\lambda^{-\frac{1}{2}} + \gamma \beta\kappa_2C\lambda^{-1}(1-\lambda^\frac{1}{2})^{-2})},
\end{align}
$\mcU_{z}(t)$ satisfies the following inequality:
\begin{align}\label{eq:diff-z-z-hat}
    \mcU_{z}(t) \leq \xi_0\lambda^{t/2},
\end{align}
where $\xi_0 = 4\lambda^{-\frac{1}{2}}\nu \big\lVert\mZ{(0)}\big\rVert_F$. First, one can check~\eqref{eq:diff-z-z-hat} holds for $t=0$. Furthermore,
\begin{align}
&\bbE\,\mcU_{z}(t{+}1)\nonumber\\
&\leq \nu\,\mcU_{z}(t) + 2\nu \lambda^{t}\big\lVert\mZ{(0)}\big\rVert_F + \gamma \nu\beta\kappa_2C\sum\limits_{s=0}^{t-1}s\lambda^s\mcU_{z}{(t{-}s{-}1)}\nonumber\\
&\leq \nu\,\xi_0\lambda^{\frac{t}{2}} + 2\nu \lambda^{t}\big\lVert\mZ{(0)}\big\rVert_F + \gamma\nu \beta\kappa_2C\xi_0\sum\limits_{s=0}^{t-1}s\lambda^s\lambda^{\frac{t{-}s{-}1}{2}}\nonumber\\
&\leq \left(\frac{2\nu}{\lambda^{\frac{1}{2}}}\big\lVert\mZ{(0)}\big\rVert_F + \frac{\nu\xi_0}{\lambda^{\frac{1}{2}}} + \frac{\gamma\nu \beta\kappa_2C\xi_0}{\lambda(1-\lambda^{\frac{1}{2}})^2}\right)\lambda^{\frac{t+1}{2}}\nonumber\\
% &\leq \left(\frac{\xi_0}{2} + \nu\xi_0 \left(\lambda^{-\frac{1}{2}} + \gamma \beta\kappa_2C\lambda^{-1}(1-\lambda^\frac{1}{2})^{-2} \right)\right)\lambda^{\frac{t+1}{2}}\nonumber\\
&\leq \left(\frac{\xi_0}{2} + \frac{\xi_0}{2}\right)\lambda^{\frac{t+1}{2}} \leq \xi_0\lambda^{\frac{t+1}{2}},
\end{align}
where we used $\sum\limits_{s=0}^{\infty}s\lambda^{\frac{s}{2}} \leq \big(1-\lambda^\frac{1}{2}\big)^{-2}$, using its corresponding generating function. We then bound $\mcR_{z}(t)$:
\begin{align}\label{eq:bound-r-t}
    \bbE\mcR_{z}(t) \overset{}&{\leq} 2 \lambda^{t} \big\lVert\mZ{(0)}\big\rVert_F + \gamma \beta\kappa_2C\sum\limits_{s=0}^{t-1}s\lambda^s \,\mcU_{z}{(t{-}s{-}1)}\nonumber\\
    &\leq \frac{\xi_0}{2\nu}\lambda^{t{-}1} + \frac{\gamma \beta\kappa_2C}{\lambda^\frac{1}{2}}\sum\limits_{s=0}^{t{-}1}s\lambda^s \,\mcU_{z}{(t{-}s{-}1)}\nonumber\\
    % &\leq \frac{\xi_0}{2\nu\lambda}\lambda^{t} + \frac{\gamma \beta\kappa_2C\xi_0}{\lambda\big(1-\lambda^{\frac{1}{2}}\big)^2}\lambda^{\frac{t}{2}}\nonumber\\
    &\leq \underbrace{\xi_0\left(\frac{\lambda^{\frac{t}{2}}}{2\nu\lambda} + \frac{\gamma\beta\kappa_2C}{\lambda\big(1-\lambda^{\frac{1}{2}}\big)^2}\right)}_{C_0\text{: constant}}\,\lambda^{\frac{t}{2}} = C_0\,\lambda^{\frac{t}{2}}.
\end{align}
We moreover know that
\begin{align}
\sqrt{\lambda} \leq \sqrt{1-\frac{\sqrt{\gamma}}{5n}+\frac{\gamma}{100n^2}} = 1-\frac{\sqrt{\gamma}}{10n} = \tilde{\lambda},
\end{align}
then, $\mcR_{z}(t) \leq C_0\tilde{\lambda}^{t}$, which concludes the proof.
% CHOCO algorithm without a learning rate $\gamma$
% \begin{align}\label{eq:update-cg-no-gamma}
% \begin{split}
%     \hat{\mX}(t{+}1) &\coloneqq \hat{\mX}(t) + Q\left(\mX(t)-\hat{\mX}(t)\right),\\
%     \mX(t{+}1) &\coloneqq \mX(t) + \left(\mW{-}\mI\right)\hat{\mX}(t{+}1),
% \end{split}
% \end{align}
% has a convergence rate of $1-\frac{\delta}{2}$, for some bounded $\omega\in[0,\omega_2)$.
\end{proof}
%%%%%%%%%%%%%%%%%%%%%%%%%%%%%%%%%%%%%%%%%%%%%%%%%%%%%%%%%%%%%%%%%%

\section{Numerical Experiments}

\label{sec:experiments}

Here, we present a set of numerical results to illustrate the communication advantages of our method. We consider the decentralized average consensus problem for a set of $n$ agents with vectors of size $d=150$. We consider two classes of networks with slow mixing times, path and ring, as well as operator \mbox{$\mathrm{qsgd}_{k}$} for message compression.

Figure~\ref{fig:gossip-qsgd} presents two different experiments. First, we compare the performance of \textit{CG} versus \textit{SCG} given the same quantization operators, \mbox{$\mathrm{qsgd}_{5}$}. We consider path graphs with size $n$ varying from $10$ to $200$, and given a random set of initial parameters, consider the number of iterations $t$ for each algorithm to reach an \mbox{$\epsilon$-consensus}, i.e., \mbox{$\Psi_{x}(t)\leq \epsilon$}, for $\epsilon=10^{{-}4}$.  We run each algorithm $10$ times and average the results. We apply a grid line search for the optimal $\gamma$ in each case. As shown in Fig.~\ref{fig:gossip-qsgd-n-iteration}, our algorithm requires a fewer number of iterations to reach consensus compared to \textit{CG}.

We furthermore provide a comparison between \textit{EG}, \textit{SEG}, \textit{CG}, and \textit{SCG} in Fig.~\ref{fig:gossip-qsgd-iteration-bit}. We consider a ring graph with \mbox{$n=120$}, and random parameters with dimension \mbox{$d=150$}. We show the decay of \mbox{$\Psi_{x}(t)$} based on the number of communications (left) as well as the number of transmitted bits (right). Figure~\ref{fig:gossip-qsgd-iteration-bit} shows that \textit{SCG} requires approximately the same number of communication rounds as \textit{SEG}, with only $10 \%$ of bits transmitted to reach the same accuracy $\epsilon=10^{{-}4}$.

We end this section with an example that explains the role of step-size $\gamma$ in the trade-off between the convergence rate and compression feasibility.
% .\cgreen{instead of saying an illustrative example, say what are you trying to illustrate}
Similar to~Fig.~\ref{fig:gossip-qsgd-n-iteration}, we consider the number of iterations for our algorithm to reach an \mbox{$\epsilon$-convergence} for an average consensus problem with \mbox{$d=100$}, over path networks with varying size $n$ with quantizer \mbox{$\mathrm{qsgd}_{3}$}. We consider a range of step-sizes \mbox{$\gamma\in[0.001,0.025]$}, and for each one, we run our algorithm for different choices of $n$.  As shown in Figure~\ref{fig:gossip-qsgd-feasibility}, given a fixed quantization ratio, $\gamma$ imposes a trade-off between the convergence rate versus the feasibility of the consensus for $\omega$. Hence, a better rate requires a larger $\gamma$, which requires a smaller compression ratio $\omega$, while for a larger $\omega$, we need to decrease $\gamma$, which slows down the convergence rate.

\begin{figure}[!t]
    \centering
    \includegraphics[width=0.95\linewidth]{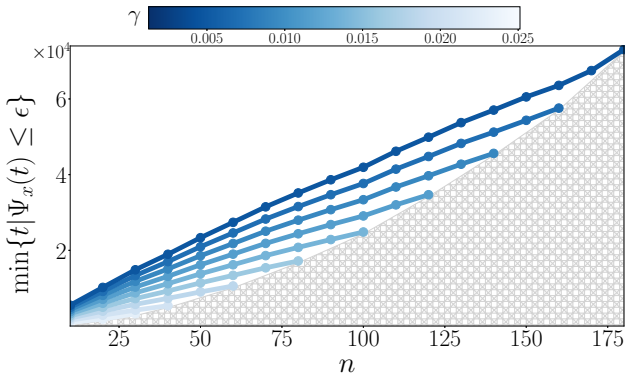}
    \caption{\textbf{Stepsize impact on convergence rate:} Each simulation is the average of 10 runs. Each point shows the number of rounds required for \textit{SCG} to reach an \mbox{$\epsilon$-consensus} ($\epsilon=10^{{-}3}$) for an average consensus problem with \mbox{$d=100$} over a path network with $n$ agents using the compression operator \mbox{$\mathrm{qsgd}_{3}$}. Each line associates with a fixed stepsize $\gamma$, and the hatched area shows the set of $n$, which for \textit{SCG} diverges given the corresponding $\gamma$.}
    \label{fig:gossip-qsgd-feasibility}
\end{figure}

\section{Conclusions}
\label{sec:conclusion}
In this work, we proposed a scalable communication-efficient algorithm for the problem of decentralized average consensus. Given a large enough compression ratio, we showed that agents can communicate compressed messages yet reach consensus with a linear rate that depends linearly on the number of agents in the network. We further presented numerical results to illustrate our theoretical studies. Future work should investigate the combined effect of communication efficiency and scalability in decentralized problems like optimization and inference using the proposed consensus technique. The impact of byzantine agents and other variations of the consensus problem remain as future work.

\bibliographystyle{IEEEbib}
\bibliography{ref}

% \vfill\pagebreak

% \appendix
% \subsection{}

\end{document}